\theoremstyle{plain}
\newtheorem{thm}{Theorem}[section]
\newtheorem*{thm*}{Theorem}
\newtheorem{lem}[thm]{Lemma}
\newtheorem{cor}[thm]{Corollary}
\theoremstyle{definition}
\theoremstyle{remark}
\newtheorem{rem}{Remark}[section]
\DeclareMathOperator*{\esssup}{ess\,sup}
\newcommand{\vol}{\operatorname{vol}}
\newcommand{\diam}{\operatorname{Diam}}
\title[Some universal inequalities]{Some universal inequalities of eigenvalues and upper bounds for the $L_{\infty}$ norm of eigenfunctions of the Laplacian}
\author{Kei Funano}
\address{Division of Mathematics \& Research Center for Pure and Applied Mathematics, Graduate School of Information Sciences, Tohoku University, 6-3-09 Aramaki-Aza-Aoba, Aoba-ku, Sendai 980-8579, Japan}
\email{kfunano@tohoku.ac.jp}
\subjclass[2010]{35P15, 53C23, 58J50}
\keywords{Eigenvalues of the Laplacian; Universal inequalities; Eigenfunctions of the Laplacian; Ricci curvature; Convex domain}
\date{\today}
\begin{document}
\maketitle

\begin{abstract}
    In this short survey, we derive some weyl-type universal inequalities of eigenvalues of the Laplacian on a closed Riemannian manifold of nonnegative Ricci curvature. We also give upper bounds for the $L_{\infty}$ norm of eigenfunctions of the Laplacian in the same setting. A detailed proof of these results did not seem to appear in the literature but the results follow from a simple combination of Milman's work and Cheng-Li's work.
\end{abstract}

\section{Introduction}

For a closed Riemannian manifold $M$ let $\lambda_k(M)$ be the $k$-th nonzero eigenvalue of the Laplacian.

One of the goal of this survey is to give a precise proof of the following theorem: For $a,b\in \mathbb{R}$, $a\lesssim b$ stands for $a\leq Cb$ for some universal (and numerical) constant $C>0$.
\begin{thm}\label{weyltype}Let $M$ be an $n$-dimensional closed Riemannian manifold of nonnegative Ricci curvature. Then for any $k\geq 1$ we have
\begin{align}\label{Milmanineq}
\lambda_k(M)\gtrsim k^{\frac{2}{n}}\lambda_1(M).
\end{align}
\end{thm}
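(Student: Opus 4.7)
The approach is to combine a Cheng--Li heat-kernel lower bound on $\lambda_k$ with Milman's universal equivalence theorem under nonneg Ricci, sharing a common geometric invariant $D^\ast(M)$. Concretely, I would fix $\kappa\in(0,1/2)$, set $D^\ast(M)=\PD(M;\kappa)$ (up to universal constants, this is equivalent to $\OD(M;-\kappa)$), and prove the two estimates
\begin{equation*}
\lambda_k(M)\gtrsim \frac{k^{2/n}}{D^\ast(M)^2}\qquad\text{and}\qquad \lambda_1(M)\lesssim \frac{1}{D^\ast(M)^2}
\end{equation*}
independently, both with universal constants. Dividing then gives the desired inequality.

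For the first bound (the Cheng--Li side), I would begin with the Li--Yau on-diagonal heat-kernel estimate $p_t(x,x)\lesssim 1/V(x,\sqrt{t}\,)$ and use Bishop--Gromov volume comparison (available since $\ric\ge 0$) to get $V(x,\sqrt{t}\,)\gtrsim (\sqrt{t}/D^\ast)^n\,\vol(M)$ for $x$ in the bulk subset $A\subset M$ that realises $\PD(M;\kappa)$. The complement $M\setminus A$ would be handled by a Vitali-type covering of $M$ by balls of radius $D^\ast$, the number of which is again controlled by Bishop--Gromov. Integrating over $M$ produces a heat-trace bound of the form $Z(t)=\sum_k e^{-t\lambda_k(M)}\lesssim (D^\ast/\sqrt{t}\,)^n$ for $t\le (D^\ast)^2$, and the standard counting-function inequality $N(\lambda)\le e\,Z(1/\lambda)$ then yields $\lambda_k(M)\gtrsim k^{2/n}/D^\ast(M)^2$. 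For the second bound I would invoke Milman's equivalence theorem for CD$(0,N)$ spaces (covering in particular our closed Riemannian $M$ with $\ric\ge 0$), which delivers $\lambda_1(M)\asymp 1/D^\ast(M)^2$ with universal constants.

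The main obstacle is keeping both bounds simultaneously dimension-free. Choosing $D^\ast=\diam(M)$ — the natural quantity in the original Cheng--Li argument — is tempting, but the matching upper bound on $\lambda_1$ is then Cheng's $\lambda_1\lesssim n^2/\diam(M)^2$, whose $n^2$ factor is genuinely necessary (as flat tori $\mathbb{T}^n$ show) and would spoil universality, yielding only $\lambda_k\gtrsim k^{2/n}\lambda_1/n^2$. Replacing the full diameter by $\PD(M;\kappa)$ repairs the $\lambda_1$ side via Milman's theorem, at the price of a modest technical refinement of the Cheng--Li step to extend the volume estimate from the bulk subset $A$ to all of $M$; this covering argument is where the real work lies, the rest being a bookkeeping combination.
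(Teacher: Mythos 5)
Your plan founders at its key structural claim: that there is a single geometric invariant $D^\ast$ for which both $\lambda_k(M)\gtrsim k^{2/n}/D^{\ast 2}$ and $\lambda_1(M)\lesssim 1/D^{\ast 2}$ hold with universal constants. With your choice $D^\ast=\PD(M;\kappa)$ the second bound is false, and the asserted equivalence $\PD(M;\kappa)\asymp\OD(M;-\kappa)$ (up to universal constants) is also false: one always has $\OD\leq\PD$, but on the round sphere $S^n$ one has $\OD(S^n;-\kappa)\asymp 1/\sqrt{n}$ while any subset of measure $1-\kappa\geq 1/2$ has diameter bounded below by a universal constant, so $\PD(S^n;\kappa)\asymp 1$; since $\lambda_1(S^n)=n$, the inequality $\lambda_1\lesssim 1/\PD^2$ fails by a factor of $n$. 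Milman's equivalence theorem ties $\lambda_1$ to the Cheeger constant, exponential concentration and the \emph{observable} diameter, not to the partial diameter of the space. If you instead take $D^\ast=\OD(M;-\kappa)$ to repair the $\lambda_1$ side, the heat-kernel side collapses: Bishop--Gromov only compares $V(x,\sqrt{t})$ with $V(x,R)$ for metric balls, and a ball of radius $\OD$ may carry an exponentially small fraction of $\vol(M)$ (again $S^n$: $\mu(B(x,1/\sqrt{n}))$ is tiny), so $V(x,\sqrt{t})\gtrsim(\sqrt{t}/\OD)^n\vol(M)$ is unavailable; in fact the trace bound $Z(t)\lesssim(\OD/\sqrt{t})^n$ is already false on $S^n$ at $t\approx\OD^2$, where the first eigenspace alone contributes $(n+1)e^{-1}$ to the left-hand side. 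Worse, $\lambda_k\gtrsim k^{2/n}/\OD^2$ is (by Milman's theorem) equivalent to the statement \eqref{Milmanineq} you are trying to prove, so it cannot serve as an independently established input.

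Even for the first bound on its own, the dimension-free bookkeeping is not secured: the Li--Yau on-diagonal estimate carries an $n$-dependent constant, and only if that constant is at most exponential in $n$ does taking $2/n$-th roots leave a universal constant; together with the covering of $M\setminus A$, this is precisely where the dimensional loss $c_n$ in Gromov's estimate (item (1) of the introduction) enters. Your strategy is in essence the division argument that produces \eqref{muriyari}, which the paper explicitly notes yields only a dimension-dependent constant; replacing $\diam M$ by $\PD$ does not remove that loss, as the sphere shows. The paper's actual route avoids any intermediate metric invariant: Milman's isoperimetric inequality (Theorem \ref{Milmanisop}) plus Bobkov--Houdr\'e converts $\sqrt{\lambda_1(M)}$ directly into a $(\frac{n}{n-1},1)$-Poincar\'e inequality, hence (Corollary \ref{poin} and Lemma \ref{holder}) into a Nash-type inequality with constant $\asymp\sqrt{\lambda_1(M)}$, and Cheng--Li's heat-kernel iteration (Theorem \ref{chenglithm}) is run with that \emph{functional} constant; the dimensional factor $(n/2)^{n/2}$ appearing in the trace bound is then cancelled by the choice $t=n/(2\lambda_k(M))$, with the $n=2$ case handled separately via \eqref{muriyari}. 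If you wish to keep a two-step ``divide by a common invariant'' structure, the invariant has to be the Sobolev/Nash constant itself, not a partial or observable diameter.
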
Note that the order of $k$ in (\ref{Milmanineq}) is sharp by the Weyl asymptotic law. Concerning the above inequality (\ref{Milmanineq}) let us focus on the following two estimates for eigenvalues of the Laplacian on a closed Riemannian manifold of nonnegative Ricci curvature:
\begin{enumerate}
    \item $\lambda_k(M)\geq \frac{c_n k^{\frac{2}{n}}}{(\diam M )^2}$, where $c_n>0$ is a constant depending only on $n$ (Gromov \cite{Gr2,Gr}. Refer also to \cite[Theorem 4.2]{SY} and \cite[Theorems 1.1 and 1.2]{HKP}).
    \item $\lambda_k(M)\lesssim \frac{n^2 k^2}{(\diam M )^2}$ (Cheng \cite[Corollary 2.2]{C}).
\end{enumerate}Combining the above two estimates we can obtain 
\begin{align}\label{muriyari}
    \lambda_k(M)\gtrsim c_n k^{\frac{2}{n}}\lambda_1(M)
\end{align}in the same setting. Since we do not need the dimensional term $c_n$ in (\ref{Milmanineq}), the inequality (\ref{Milmanineq}) is better than the inequality (\ref{muriyari}).

The opposite of (\ref{Milmanineq}) was obtained by  Liu in \cite[Theorem 1.1]{Liu}. Under the same assumption of Theorem \ref{weyltype} Liu showed 
\begin{align}\label{liuineq}
    \lambda_k(M)\lesssim k^2\lambda_1(M),
\end{align}which is sharp with respect to the order of $k$. 

Throughout this survey $\mu$ denotes the uniform probability measure on $M$, i.e., $\mu(\cdot):=\vol (\cdot)/\vol(M)$. For a measurable function $f:M\to \mathbb{R}$ and $p>0$ we put
\begin{align*}
\|f\|_p:=\Big(\int_{M}|f(x)|^p d\mu(x)\Big)^{\frac{1}{p}}
\end{align*}and 
\begin{align*}
    \|f\|_{\infty}:=\esssup_{x\in M}|f(x)|.
\end{align*}

We also give the proof of the following theorem.
\begin{thm}\label{eigenfctconc}
    Let $n\geq 3$ and $M$ be an $n$-dimensional closed Riemannian manifold of nonnegative Ricci curvature. Then for any $k\geq 1$ and any eigenfunction $\varphi_k$ of the Laplacian corresponding to $\lambda_k(M)$ we have
\begin{align}\label{supnorm}
\|\varphi_k\|_{\infty}\lesssim \Big(\frac{c\lambda_k(M)}{\lambda_1(M)}\Big)^{\frac{n}{4}}\|\varphi_k\|_2.
\end{align}for some universal constant $c>0$. In particular we have
\begin{align}\label{inftynorm}
\|\varphi_k\|_{\infty}\lesssim (ck)^{\frac{n}{2}}\|\varphi_k\|_2.
\end{align}
\end{thm}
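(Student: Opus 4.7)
The strategy is to apply Moser iteration to the eigenfunction equation $\Delta\varphi_k = -\lambda_k(M)\varphi_k$, with the essential geometric input being an $L^2$-Sobolev inequality on $(M,\mu)$ whose constant is controlled by $\lambda_1(M)^{-1}$. On a closed manifold with $\operatorname{Ric}\geq 0$, Milman's equivalence of Poincar\'e, Cheeger, and isoperimetric/Sobolev constants under curvature lower bounds yields, for $n\geq 3$, a Sobolev inequality of the form
\[
\|f-\bar f\|_{2n/(n-2)}^2 \;\leq\; \frac{A}{\lambda_1(M)}\,\|\nabla f\|_2^2,
\qquad \bar f := \int_M f\,d\mu,
\]
with a universal constant $A>0$. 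This is the one geometric ingredient beyond the eigenfunction equation itself.

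The iteration then proceeds in the standard way. Multiplying $\Delta\varphi_k = -\lambda_k(M)\varphi_k$ by $|\varphi_k|^{2p-2}\varphi_k$ for $p\geq 1$ and integrating by parts over the closed manifold yields
\[
(2p-1)\int_M |\varphi_k|^{2p-2}|\nabla\varphi_k|^2\,d\mu \;=\; \lambda_k(M)\int_M|\varphi_k|^{2p}\,d\mu,
\]
so that $g:=|\varphi_k|^p$ satisfies $\|\nabla g\|_2^2 \leq p\,\lambda_k(M)\,\|g\|_2^2$. Plugging $g$ into the Sobolev inequality, bounding $\bar g^2\leq \|g\|_2^2$ by Jensen, and using $\lambda_k\geq \lambda_1$ to absorb the lower-order $\|g\|_2^2$ term, one arrives at
\[
\|\varphi_k\|_{2pn/(n-2)} \;\leq\; \left(\frac{Cp\,\lambda_k(M)}{\lambda_1(M)}\right)^{1/(2p)}\|\varphi_k\|_{2p}
\]
for a universal $C>0$ depending only on $A$.

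Set $q_i := 2\bigl(\tfrac{n}{n-2}\bigr)^i$ and apply the previous inequality iteratively with $p=q_i/2$; telescoping and letting $i\to\infty$ gives
\[
\|\varphi_k\|_\infty \;\leq\; \prod_{i=0}^{\infty}\left(\frac{Cq_i\,\lambda_k(M)}{2\,\lambda_1(M)}\right)^{1/q_i}\|\varphi_k\|_2.
\]
A direct geometric-series calculation yields $\sum_{i\geq 0}q_i^{-1}=n/4$, while $\sum_{i\geq 0}q_i^{-1}\log q_i$ is bounded by a universal multiple of $n$; consequently the product $\prod_{i\geq 0}q_i^{1/q_i}$ is at most $c_0^{n/4}$ for a universal $c_0>0$. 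Combining these two facts produces (\ref{supnorm}) after absorbing $c_0$ into the constant $c$, and (\ref{inftynorm}) then follows at once by substituting Liu's bound $\lambda_k(M)\lesssim k^2\lambda_1(M)$ from (\ref{liuineq}). I expect the main obstacle to lie in the first step, namely extracting the Sobolev inequality with an honestly dimension-free constant $A$ in front of $1/\lambda_1(M)$: Milman's results are most naturally phrased via concentration, Cheeger constants, or isoperimetric profiles, and some unpacking is needed to confirm that the $1/\lambda_1$ scaling passes to the $L^2$-Sobolev inequality without introducing spurious factors of $n$ outside the exponent.
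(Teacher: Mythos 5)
Your proposal is correct, but it proves the theorem by a genuinely different route than the paper. The paper does not run a Moser iteration: it converts Corollary \ref{poin} (at $p=2$) via Lemma \ref{holder} into a Nash-type inequality, feeds that into the Cheng--Li heat-kernel argument to get the on-diagonal bound (\ref{infoconc}) on $G(x,x,t)=\sum_{i\ge 1}e^{-\lambda_i t}\phi_i(x)^2$, and then reads off the eigenfunction bound from $e^{-\lambda_k t}\varphi_k(x)^2\le G(x,x,t)\lesssim (\lambda_1(M))^{-n/2}t^{-n/2}$ by choosing $t=n/(2\lambda_k(M))$; the point of that route is that (\ref{infoconc}) was already established in the proof of Theorem \ref{weyltype}, so Theorem \ref{eigenfctconc} comes as a byproduct (and in fact bounds the whole spectral cluster $\sum_{\lambda_i\le\lambda_k}\phi_i(x)^2$, not just one eigenfunction). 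Your iteration scheme is self-contained once the Sobolev inequality is in hand: the reverse H\"older step $\|\varphi_k\|_{2pn/(n-2)}\le (Cp\,\lambda_k/\lambda_1)^{1/(2p)}\|\varphi_k\|_{2p}$ is correct (the test-function computation gives $\|\nabla |\varphi_k|^p\|_2^2=\frac{p^2}{2p-1}\lambda_k\| |\varphi_k|^p\|_2^2\le p\lambda_k\| |\varphi_k|^p\|_2^2$, and the mean term is absorbed using $\lambda_k\ge\lambda_1$), the exponent bookkeeping $\sum_i q_i^{-1}=n/4$ and $\prod_i q_i^{1/q_i}\le c_0^{\,n/4}$ checks out, and it applies directly to an arbitrary eigenfunction of $\lambda_k$ without passing through an orthonormal basis. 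The one issue you flag --- whether the $L^2$-Sobolev constant is honestly dimension-free --- is exactly what the paper supplies: Milman's isoperimetric bound (Theorem \ref{Milmanisop}) plus Bobkov--Houdr\'e (Theorem \ref{sobisop}) plus the $L^1\to L^p$ upgrade (Lemmas \ref{1top}, \ref{medexp}, \ref{Coste}) give Corollary \ref{poin}, whose constant at $p=2$ carries only the factor $\frac{2(n-1)}{n-2}\le 4$ for $n\ge 3$, so your hypothesis with a universal $A$ is legitimate and your argument closes.
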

The inequality (\ref{inftynorm}) follows from (\ref{supnorm}) together with (\ref{liuineq}).

A classical result by H\"{o}lmander \cite{H} (see also \cite{A}, \cite{DG}, \cite{L}, \cite{S}, and \cite
[Corollary 8.6]{Z}) states that
\begin{align}\label{local}
    \|\varphi_k\|_{\infty}\leq C \lambda_k(M)^{\frac{n-1}{4}}\|\varphi_k\|_2
\end{align}for any closed Riemannian manifold, where the constant $C=C(M,g)$ depends on $(M,g)$. This inequality is sharp and better than (\ref{supnorm}) when $\lambda_k(M)$ is large. In the case where $\lambda_k(M)<<\lambda_1(M)^n$ the inequality (\ref{supnorm}) is better than the inequaity (\ref{local}).

The proof of Theorems \ref{weyltype} and \ref{eigenfctconc} follows from a simple combination of Milman's result \cite{Mi1} and Cheng-Li's result \cite{CL}. In fact under the nonnegativity of Ricci curvature, Milman proved the Sobolev inequality in terms of the first nonzero eigenvalue and Cheng-Li proved lower bounds of eigenvalues of the Laplacian in terms of the Sobolev constant. As a byproduct of the proof of Theorem \ref{weyltype} we prove Theorem \ref{eigenfctconc}.

\begin{rem}\upshape Theorems \ref{weyltype} and \ref{eigenfctconc} hold for a convex domain in a complete Riemannian manifold of nonnegative Ricci curvature. In that case we impose the Neumann boundary condition.
\end{rem}

\section{Proof}
Let $M$ be a closed Riemannian manifold and $\mu$ be the uniform probability measure. For the isoperimetric profile $I=I_{(M,\mu)}:[0,1]\to \mathbb{R}$ (Refer to \cite{Mi1} for the definition) we put $\tilde{I}(t):=\min \{ I(t), I(1-t)\}$, $t\in [0,1/2]$. 
\begin{thm}[{E.~Milman, \cite[Remark 2.11]{Mi1}}]\label{Milmanisop}Let $M$ be an $n$-dimensional closed Riemannian manifold of nonnegative Ricci curvature. Then we have the isoperimetric inequality
\begin{align*}
\tilde{I}(t)\gtrsim \sqrt{\lambda_1(M)}t^{\frac{n-1}{n}} \ \ \forall t\in [0,1/2].
\end{align*}
\end{thm}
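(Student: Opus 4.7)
My plan is to derive the isoperimetric inequality by converting it, via the coarea formula, to an $L^1$ Sobolev inequality whose prefactor is controlled by $1/\sqrt{\lambda_1(M)}$, and then to prove that Sobolev inequality using heat-semigroup techniques under nonnegative Ricci curvature.

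First, by the classical Federer--Fleming/Maz'ya equivalence, the claim $\tilde I(t)\gtrsim \sqrt{\lambda_1(M)}\,t^{(n-1)/n}$ for $t\in[0,1/2]$ is equivalent, up to universal constants, to the Sobolev-type inequality
\begin{align*}
\|f-c_f\|_{\frac{n}{n-1}}\lesssim \frac{1}{\sqrt{\lambda_1(M)}}\,\|\nabla f\|_1
\end{align*}
for smooth $f:M\to\mathbb{R}$ and any median $c_f$ of $f$. So it suffices to establish this Sobolev inequality on $(M,\mu)$.

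Second, to produce the Sobolev inequality with the correct prefactor, I would combine three consequences of Ricci $\geq 0$: (i) the $L^2$ spectral decay of the heat semigroup, $\|P_t g-\int g\,d\mu\|_2\leq e^{-\lambda_1(M)t}\|g\|_2$; (ii) the Li--Yau heat-kernel bound giving the ultracontractive estimate $\|P_t\|_{L^1\to L^\infty}\lesssim \vol(B(x,\sqrt t))^{-1}$ for small $t$; and (iii) the Bishop--Gromov volume comparison $\vol(B(x,r))\geq c\,r^n$ at the diffusive scale $r\sim 1/\sqrt{\lambda_1(M)}$. Feeding (i)--(iii) into the Varopoulos--Carlen--Kusuoka--Saloff-Coste scheme (ultracontractivity plus spectral gap $\Longrightarrow$ Nash inequality $\Longrightarrow$ Sobolev) with the time choice $t\sim 1/\lambda_1(M)$ should produce the target Sobolev inequality, from which the isoperimetric claim follows by the reduction above.

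The main obstacle is keeping the prefactor truly dimension-free: the Li--Yau bound and Bishop--Gromov naturally inject dimensional constants, and the scheme must be arranged so that these are absorbed only into the \emph{exponent} $(n-1)/n$, not into the prefactor $\sqrt{\lambda_1(M)}$. As Milman's argument makes clear, this is possible precisely because the diffusive scale $1/\sqrt{\lambda_1(M)}$ is the same scale at which Bishop--Gromov supplies Euclidean-type volume lower bounds under Ricci $\geq 0$; the matching of these two intrinsic scales is what yields the clean prefactor, and rigorously verifying this matching (so that no stray $\diam(M)$ or $n$-dependent constant contaminates the prefactor) is the delicate part of the proof.
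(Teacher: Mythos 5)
Your plan has two genuine gaps, and the second one defeats the stated purpose of the theorem. First, the semigroup scheme you describe produces the wrong kind of Sobolev inequality. Ultracontractivity plus a spectral gap, run through the Varopoulos--Carlen--Kusuoka--Saloff-Coste machinery, yields a Nash inequality or equivalently a $(\frac{2n}{n-2},2)$-Poincar\'e inequality, i.e.\ an inequality with $\||\nabla f|\|_2$ on the right. But the Federer--Fleming/Maz'ya/Bobkov--Houdr\'e equivalence you invoke in your first step holds only for the $p=1$ inequality $\sqrt{\lambda_1(M)}\,\|f-M_{\mu}f\|_{\frac{n}{n-1}}\lesssim \||\nabla f|\|_1$: it is the coarea formula that makes the $L^1$ case equivalent to the isoperimetric inequality, and an $L^2$-Sobolev/Nash/heat-kernel bound does not imply the $L^1$ one (the implication goes only from $p=1$ upward, as in Lemma \ref{1top}; heat-kernel estimates alone never recover isoperimetry). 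So even granting your inputs (i)--(iii), step 2 ends with an inequality that cannot be fed back into step 1. Second, the constants. Under $\ric\geq 0$ Bishop--Gromov gives volume \emph{upper} bounds; a lower bound $\mu(B(x,1/\sqrt{\lambda_1(M)}))\gtrsim 1$ requires in addition the diameter bound $\diam M\leq \pi/\sqrt{\lambda_1(M)}$ and still produces a factor like $\pi^{-n}$, and Li--Yau and the Nash-to-Sobolev iteration each inject further dimensional factors. These sit in the multiplicative prefactor and cannot be traded into the exponent $t^{\frac{n-1}{n}}$: at best your route recovers a dimension-dependent constant $c_n$, i.e.\ essentially the inequality (\ref{muriyari}) that the dimension-free statement of Theorem \ref{Milmanisop} is meant to improve. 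Your closing sentence acknowledges this difficulty but offers no mechanism that removes the $n$-dependence.

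For comparison, the paper does not reprove any analysis here: it combines Milman's Remark 2.11, $\tilde{I}(t)\gtrsim D_{1,\infty}t^{\frac{n-1}{n}}$, with Milman's Theorem 2.4, which says that under nonnegative Ricci curvature the best $(1,\infty)$-Poincar\'e constant $D_{1,\infty}$ is equivalent to $\sqrt{\lambda_1(M)}$ up to universal constants; the dimension-free prefactor is inherited from Milman's main theorem that, under such convexity assumptions, spectral gap (indeed mere concentration) implies isoperimetry with universal constants. If you want a self-contained argument in the semigroup spirit you propose, the viable route is different from yours: use Ledoux's semigroup proof of Buser's inequality to get the linear profile $\tilde{I}(t)\gtrsim \sqrt{\lambda_1(M)}\,t$ with a universal constant, and then the concavity of $I^{\frac{n}{n-1}}$ under nonnegative Ricci curvature to upgrade $t$ to $t^{\frac{n-1}{n}}$ on $[0,1/2]$ without introducing any dimensional factor.
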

In \cite[Remark 2.11]{Mi1} Milman stated that 
\begin{align*}
    \tilde{I}(t)\gtrsim D_{1,\infty}t^{\frac{n-1}{n}} \ \ \forall t\in [0,1/2],
\end{align*}where $D_{1,\infty}$ is the best constant  that satisfies the $(1,\infty)$-Poincaré inequality. Since $D_{1,\infty}$ is equivalent to $\sqrt{\lambda_1(M)}$ by Milman's result (\cite[Theorem 2.4]{Mi1}), we have Theorem \ref{Milmanisop}.

For a measurable function $f:M\to \mathbb{R}$ we define a \emph{median} of $f$ as a number $a$ such that 
\begin{align*}
    \mu(f(x)\geq a)\geq \frac{1}{2} \text{ and } \mu(f(x)\leq a)\geq \frac{1}{2}.
\end{align*}Note that $a$ might not be unique. Let $M_{\mu}f$ denote one of medians of $f$. 
We put 
\begin{align*}
    E_{\mu}f:=\int_M f(x)d\mu
\end{align*}whenever the right-hand side makes sense.

One can transfer the isoperimetric inequality appeared in Theorem \ref{Milmanisop} to a $(1, \frac{n}{n-1})$-Poincar\'e inequality via the following theorem. Let $\mathcal{F}$ be the space of Lipschitz functions on $M$. 
\begin{thm}[{Bobkov–Houdr\'e \cite{BH}, Federer–Fleming \cite{FF}, Maz'ya \cite{Ma}}]\label{sobisop}Let $0<r\leq 1$. Without any assumption on $M$ the $(1/r,1)$-Poincar\'e inequality 
\begin{align*}
D\|f-M_{\mu}f \|_{\frac{1}{r}}\leq \| | \nabla f | \|_{1}\ \ \forall f\in \mathcal{F}
\end{align*}is equivalent to the isoperimetric inequality
\begin{align*}\tilde{I}(t)\geq Dt^r \ \ \forall t\in [0,1/2].
\end{align*}
\end{thm}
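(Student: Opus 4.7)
The plan is to prove the two implications separately, with the coarea formula serving as the common bridge.

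For the direction \emph{isoperimetry $\Rightarrow$ Poincar\'e}, I would normalize so that $M_\mu f = 0$ and split $f = f^+ - f^-$ into positive and negative parts. Since $M_\mu f = 0$, each of $f^\pm$ is supported on a set of $\mu$-measure at most $1/2$; hence, setting $\phi(t) := \mu(\{f > t\})$, the coarea formula combined with the isoperimetric hypothesis yields
\begin{equation*}
    \int_M |\nabla f^+|\, d\mu = \int_0^\infty P(\{f > t\})\, dt \geq D \int_0^\infty \phi(t)^r\, dt.
\end{equation*}
Writing $p := 1/r \geq 1$, the layer-cake representation gives $\|f^+\|_p^p = p \int_0^\infty t^{p-1} \phi(t)\, dt$, so the Poincar\'e estimate reduces to the Hardy-type inequality
\begin{equation*}
    p \int_0^\infty t^{p-1} \phi(t)\, dt \leq \Big(\int_0^\infty \phi(t)^r\, dt\Big)^p
\end{equation*}
for nonincreasing $\phi$. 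Setting $u := \phi^r$ (so that $\phi = u^p$) and $U(t) := \int_0^t u(s)\, ds$, monotonicity of $u$ gives $t\, u(t) \leq U(t)$, hence $t^{p-1} u(t)^p \leq U(t)^{p-1} U'(t)$, and integration in $t$ produces the required bound. Adding the analogous estimate for $f^-$ and using $\|f\|_p \leq \|f^+\|_p + \|f^-\|_p$ completes this direction.

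For the converse \emph{Poincar\'e $\Rightarrow$ isoperimetry}, I would fix a Borel set $A$ with $\mu(A) < 1/2$ and consider the Lipschitz approximations $f_\varepsilon(x) := \max\{0, 1 - \varepsilon^{-1} d(x, A)\}$. For small $\varepsilon > 0$ the $\varepsilon$-neighborhood of $A$ still has $\mu$-measure below $1/2$, so $0$ is a valid median of $f_\varepsilon$. Since $f_\varepsilon \geq \mathbf{1}_A$, we have $\|f_\varepsilon\|_{1/r} \geq \mu(A)^r$, while $\int_M |\nabla f_\varepsilon|\, d\mu \to P(A)$ as $\varepsilon \to 0$ by the Minkowski-content description of perimeter. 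Applying the Poincar\'e inequality and passing to the limit yields $D \mu(A)^r \leq P(A)$; taking the infimum over admissible $A$ and handling the endpoint $\mu(A) = 1/2$ by approximation from below gives $\tilde I(t) \geq D t^r$ on $[0,1/2]$.

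The main obstacle is the Hardy-type inequality above, which relies delicately on the monotonicity of the distribution function of $f^+$; a secondary concern is ensuring that $\int_M |\nabla f_\varepsilon|\, d\mu$ converges to the appropriate notion of perimeter $P(A)$ for an arbitrary Borel set $A$, without regularity hypotheses on $\partial A$.
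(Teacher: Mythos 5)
The paper does not actually prove this statement --- it defers to \cite[Theorem 1.1]{BH} --- and your argument is precisely the classical Federer--Fleming/Maz'ya/Bobkov--Houdr\'e route used there. Your direction ``isoperimetry $\Rightarrow$ Poincar\'e'' is complete and correct: splitting at the median so that $\mu(\{\pm f>0\})\leq 1/2$, applying the coarea inequality to $f^{\pm}$, the layer-cake formula, and your proof of the Hardy-type inequality for nonincreasing $\phi$ (from $t\,u(t)\leq U(t)$ and $\int_0^\infty U^{p-1}U'\,dt=U(\infty)^p/p$) is the standard argument; since $|\nabla f|=|\nabla f^+|+|\nabla f^-|$ a.e., it yields the constant $D$ with no loss. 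The ``delicate'' obstacle you flag there is not one: your two-line proof of that inequality is already complete.

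The converse direction, however, has genuine gaps as written. First, you only test sets with $\mu(A)\leq 1/2$, but $\tilde I(t)=\min\{I(t),I(1-t)\}$, so you must also show $\mu^+(B)\geq Dt^r$ for Borel sets $B$ with $\mu(B)=1-t\geq 1/2$, and Minkowski content is not symmetric under complementation for arbitrary Borel sets, so you cannot simply pass to $M\setminus B$; the fix is that for such $B$ the same $f_\varepsilon$ works with the median now equal to $1$ (since $f_\varepsilon\equiv 1$ on $\overline B$, which has measure $\geq 1/2$), giving $\|f_\varepsilon-1\|_{1/r}\geq (1-\mu(B_\varepsilon))^r$. Second, ``approximation from below'' at the endpoint $\mu(A)=1/2$ does not work: the boundary measure $\mu^+$ is not monotone under inclusion and, without any assumption on $M$, the profile $I$ need not be continuous; the same median-$1$ observation handles this case instead. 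Third, your secondary concern disappears once you argue with a liminf rather than a limit: you never need $\int_M|\nabla f_\varepsilon|\,d\mu\to P(A)$, only the bound $\int_M|\nabla f_\varepsilon|\,d\mu\leq \varepsilon^{-1}\bigl(\mu(A_\varepsilon)-\mu(A)\bigr)$ together with a subsequence $\varepsilon_k\to 0$ realizing $\mu^+(A)=\liminf_{\varepsilon\to 0}\varepsilon^{-1}\bigl(\mu(A_\varepsilon)-\mu(A)\bigr)$; and in the bad cases (e.g.\ $\mu(\overline A)>\mu(A)$, as for a dense $A$, or $\varepsilon$-neighborhoods whose measure does not drop below $1/2$, which is the only way your claim that $0$ is a median of $f_\varepsilon$ can fail when $\mu(A)<1/2$) one has $\mu^+(A)=\infty$, so the isoperimetric inequality is vacuous there. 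With these repairs your argument matches the proof in the cited source.
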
See \cite[Theorem 1.1]{BH} for the proof of the above theorem.

Theorem \ref{Milmanisop} together with Theorem \ref{sobisop} yields the following.
\begin{cor}\label{milmancor}Let $M$ be an $n$-dimensional closed Riemannian manifold of nonnegative Ricci curvature. Then we have the $(\frac{n}{n-1},1)$-Poincar\'e inequality  
\begin{align*}
    \sqrt{\lambda_1(M)}\|f-M_{\mu}f \|_{\frac{n}{n-1}} \lesssim \| |\nabla f|\|_{1}\ \ \forall f\in \mathcal{F}.
    \end{align*}
\end{cor}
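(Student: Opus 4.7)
The plan is to apply Theorem \ref{sobisop} directly, using the isoperimetric inequality from Theorem \ref{Milmanisop} as input.

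First I would set $r := (n-1)/n$, so that $1/r = n/(n-1)$, matching the exponent appearing in the desired Poincar\'e inequality. Theorem \ref{Milmanisop} gives a universal constant $c > 0$ such that
\begin{align*}
\tilde{I}(t) \geq c\sqrt{\lambda_1(M)}\,t^{\frac{n-1}{n}} \quad \forall t \in [0,1/2].
\end{align*}
This is precisely the hypothesis of Theorem \ref{sobisop} with $D := c\sqrt{\lambda_1(M)}$ and $r$ as chosen above.

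Applying Theorem \ref{sobisop} in the direction (isoperimetric $\Rightarrow$ Poincar\'e), I obtain
\begin{align*}
c\sqrt{\lambda_1(M)}\,\|f - M_\mu f\|_{\frac{n}{n-1}} \leq \||\nabla f|\|_1 \quad \forall f \in \mathcal{F},
\end{align*}
which, after absorbing $c$ into the implicit constant in $\lesssim$, is exactly the claim of Corollary \ref{milmancor}.

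There is no real obstacle here: the corollary is just the composition of the two cited theorems, with the exponent $r = (n-1)/n$ chosen so that $1/r$ matches the target Lebesgue exponent $n/(n-1)$. The only thing to verify is that $0 < r \leq 1$, which holds since $n \geq 1$; this ensures Theorem \ref{sobisop} is actually applicable.
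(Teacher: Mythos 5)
Your proposal is correct and is exactly the paper's argument: the corollary is obtained by feeding the isoperimetric bound of Theorem \ref{Milmanisop} into the equivalence of Theorem \ref{sobisop} with $r=(n-1)/n$ and $D$ a universal multiple of $\sqrt{\lambda_1(M)}$. Nothing further is needed.
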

From now on we shall study Poincar\'e inequalities.
\begin{lem}[{\cite[Section 1.1.3]{L}}]\label{1top}Suppose that the $(\frac{n}{n-1},1)$-Poincar\'e inequality 
\begin{align}\label{s1}
    D\|f-M_{\mu}f \|_{\frac{n}{n-1}}\leq \| |\nabla f|\|_{1}\ \ \forall f\in \mathcal{F}
\end{align}holds for $M$. Then for any $p\geq 1$ we have the $(\frac{np}{n-p},p)$-Poincar\'e inequality 
\begin{align}\label{s2}
D\| f-M_{\mu}f     \|_{\frac{np}{n-p}}\leq \frac{(n-1)p}{n-p}\|  |\nabla f| \|_p\ \ \forall f\in \mathcal{F}.
\end{align}
\end{lem}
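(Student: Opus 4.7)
The plan is to apply the hypothesis (\ref{s1}) not to $f$ directly but to a suitable power of $f - M_\mu f$, and then use Hölder's inequality on the gradient side to recreate a Poincaré-type inequality for exponent $p$ on the right-hand side.

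More precisely, set $c:=M_\mu f$ and observe that $0$ is a median of $f-c$. For a parameter $\alpha>1$ to be chosen, consider the function $g:=|f-c|^{\alpha}\operatorname{sgn}(f-c)$. Since $f$ is Lipschitz and $\alpha>1$, $g$ is locally Lipschitz with $|\nabla g|=\alpha|f-c|^{\alpha-1}|\nabla f|$ a.e., and $0$ is still a median of $g$. Applying (\ref{s1}) to $g$ (after a standard truncation/approximation to stay inside $\mathcal{F}$, or assuming the setting that allows this extension as in \cite{BH}) gives
\begin{equation*}
D\,\bigl\||f-c|^{\alpha}\bigr\|_{\frac{n}{n-1}} \;\le\; \alpha\int_{M}|f-c|^{\alpha-1}|\nabla f|\,d\mu.
\end{equation*}
Bounding the right-hand side by Hölder's inequality with exponents $p$ and $p/(p-1)$ yields
\begin{equation*}
D\,\bigl\||f-c|^{\alpha}\bigr\|_{\frac{n}{n-1}} \;\le\; \alpha\,\bigl\||f-c|^{\alpha-1}\bigr\|_{\frac{p}{p-1}}\,\bigl\||\nabla f|\bigr\|_{p}.
\end{equation*}

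Next I would choose $\alpha$ so that both $L^q$-norms of powers of $|f-c|$ coincide. Setting $q:=\alpha n/(n-1)=(\alpha-1)p/(p-1)$ forces $\alpha(p-n)=-p(n-1)$, hence
\begin{equation*}
\alpha=\frac{p(n-1)}{n-p},\qquad q=\frac{np}{n-p}.
\end{equation*}
With this choice, the left-hand side becomes $D\,\|f-c\|_{q}^{\alpha}$ and the first factor on the right becomes $\|f-c\|_{q}^{\alpha-1}$, so dividing by $\|f-c\|_{q}^{\alpha-1}$ (assuming it is finite and nonzero; otherwise (\ref{s2}) is trivial or obtained by truncation) gives exactly (\ref{s2}) with the constant $\alpha=(n-1)p/(n-p)$.

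The main obstacle is not the algebra but the regularity of the test function $g=|f-c|^{\alpha}\operatorname{sgn}(f-c)$: it is only locally Lipschitz and grows like $|f|^{\alpha}$, so in order to legitimately invoke (\ref{s1}) one should first replace $g$ by the truncation $g_{N}:=\max(-N,\min(N,g))$, which lies in $\mathcal{F}$, apply (\ref{s1}) to $g_{N}$, justify the chain rule $|\nabla g_{N}|\le\alpha|f-c|^{\alpha-1}|\nabla f|\mathbf{1}_{\{|g|\le N\}}$ a.e., and then let $N\to\infty$ using monotone/dominated convergence. A second, minor point is that the case $p=1$ requires no argument since it is the hypothesis, while the case $p\ge n$ is outside the scope (the exponent $np/(n-p)$ ceases to be positive), so one really only proves the lemma for $1<p<n$.
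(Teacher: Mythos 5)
Your proof is correct and is essentially identical to the paper's: both apply the $(\frac{n}{n-1},1)$-inequality to $g=|f-M_\mu f|^{\alpha}\operatorname{sgn}(f-M_\mu f)$ with $\alpha=\frac{(n-1)p}{n-p}$ and conclude via H\"older's inequality with exponents $p,\,p/(p-1)$. The truncation you worry about is not needed here, since $M$ is closed and $f$ Lipschitz is therefore bounded, so $g$ is genuinely Lipschitz and lies in $\mathcal{F}$.
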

In \cite[Section 1.1.3]{L} the above lemma is stated for $\mathbb{R}^n$. In there the Poincar\'e inequalities corresponding to (\ref{s1}) and (\ref{s2}) are stated for compactly supported smooth functions and the left-hand sides of (\ref{s1}) and (\ref{s2}) in there are the norms of $f$, not the norms of $f-M_{\mu}f$.

\begin{proof}[Proof of Lemma \ref{1top}]
Assume (\ref{s1}) and take $f\in \mathcal{F}$. We may assume that $M_{\mu}f=0$. Setting $\alpha:= \frac{(n-1)p}{n-p}$ we find $\alpha>1$.
For any $x\in M$ define \begin{align*}
    g(x):=\left\{
\begin{array}{ll}
|f(x)|^{\alpha} & \text{ if }f(x)\geq 0 \\
 -|f(x)|^{\alpha} & \text{ if }f(x)< 0.
\end{array}
\right.
\end{align*}Observe that $g\in \mathcal{F}$ and $M_{\mu}g=0$. Thus (\ref{s1}) yields that
\begin{align*}
    D\|f\|^{\alpha}_{\frac{\alpha n }{n-1}}=D\|g\|_{\frac{n}{n-1}}\leq \alpha \int |f(x)|^{\alpha-1}|\nabla f(x)|d\mu(x).
\end{align*}Let $p'$ be the conjugate of $p$, i.e., $p^{-1}+p'^{-1}=1$. Using the H\"older inequality we obtain
\begin{align*}
    D\|f\|_{\frac{np}{n-p}}^{\frac{(n-1)p}{n-p}}=D\|f\|^{\alpha}_{\frac{\alpha n }{n-1}} \leq  \alpha \||f|^{\alpha-1}\|_{p'}\||\nabla f|\|_p=\alpha \|f\|_{\frac{np}{n-p}}^{\frac{n(p-1)}{n-p}}\||\nabla f|\|_p,
    \end{align*}which gives (\ref{s2}). This completes the proof.
\end{proof}
To connect the $L_p$ norm of $f-E_{\mu}f$ with $f-M_{\mu}f$ we use the following.
\begin{lem}[{\cite[Lemma 2.1]{Mi1}}]\label{medexp}For any $p\geq 1$ and $f\in \mathcal{F}$ we have
\begin{align*}
    \frac{1}{2}\|f-E_{\mu}f\|_p\leq \| f-M_{\mu}f\|_p \leq 3\| f-E_{\mu}f\|_p.
\end{align*}
\end{lem}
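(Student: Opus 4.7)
The plan is to reduce both halves of Lemma~\ref{medexp} to an estimate on the scalar deviation $|E_\mu f - M_\mu f|$. Viewing $E_\mu f$ and $M_\mu f$ as constant functions on $M$, the triangle inequality in $L^p(\mu)$ gives
\begin{align*}
\|f - E_\mu f\|_p &\leq \|f - M_\mu f\|_p + |E_\mu f - M_\mu f|,\\
\|f - M_\mu f\|_p &\leq \|f - E_\mu f\|_p + |E_\mu f - M_\mu f|.
\end{align*}
Hence it suffices to bound $|E_\mu f - M_\mu f|$ by $\|f - M_\mu f\|_p$ for the left inequality and by $\|f - E_\mu f\|_p$ for the right inequality.

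For the left bound I would simply apply Jensen's inequality to $|\cdot|$, obtaining
\begin{align*}
|E_\mu f - M_\mu f| = |E_\mu(f - M_\mu f)| \leq \|f - M_\mu f\|_1 \leq \|f - M_\mu f\|_p,
\end{align*}
where the last step uses monotonicity of $L^p$ norms on a probability space. Substituting into the first triangle inequality yields $\|f - E_\mu f\|_p \leq 2\|f - M_\mu f\|_p$, which is the claimed left-hand estimate.

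For the right bound the extra ingredient is the variational characterization of the median: over all real numbers $a$, the value $M_\mu f$ minimizes $a \mapsto E_\mu|f - a|$. Granting this, the same Jensen step together with the monotonicity of $L^p$ norms produces
\begin{align*}
|E_\mu f - M_\mu f| \leq \|f - M_\mu f\|_1 \leq \|f - E_\mu f\|_1 \leq \|f - E_\mu f\|_p,
\end{align*}
and the second triangle inequality then gives $\|f - M_\mu f\|_p \leq 2\|f - E_\mu f\|_p$, stronger than the stated constant $3$.

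The only non-formal step is the median-minimization inequality, which is the main (and essentially the only) obstacle. For $a > M_\mu f$ I would split $E_\mu|f - a| - E_\mu|f - M_\mu f|$ according to the partition $\{f \leq M_\mu f\}$, $\{M_\mu f < f < a\}$, $\{f \geq a\}$, rewrite each integrand without absolute values, and then use the defining inequalities $\mu(f \leq M_\mu f) \geq 1/2$ and $\mu(f > M_\mu f) \leq 1/2$ to verify that the resulting scalar expression is nonnegative; the case $a < M_\mu f$ is symmetric. This is elementary and completes the argument.
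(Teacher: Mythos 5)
Your proof is correct. Note that the paper itself gives no proof of this lemma; it simply quotes \cite[Lemma 2.1]{Mi1}, so the natural comparison is with Milman's argument. Milman controls the scalar gap $|M_{\mu}f-E_{\mu}f|$ by a Chebyshev--Markov bound: since $\mu(f\geq M_{\mu}f)\geq 1/2$ and $\mu(f\leq M_{\mu}f)\geq 1/2$, one has $\mu\big(|f-E_{\mu}f|\geq |M_{\mu}f-E_{\mu}f|\big)\geq 1/2$, hence $|M_{\mu}f-E_{\mu}f|\leq 2^{1/p}\|f-E_{\mu}f\|_p$, and the triangle inequality then gives the constant $1+2^{1/p}\leq 3$; the reverse inequality is the same Jensen-plus-monotonicity step you use, with constant $2$. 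Your substitute for the Chebyshev step --- the variational characterization of the median as a minimizer of $a\mapsto E_{\mu}|f-a|$ --- is classical, and your sketched verification (splitting over $\{f\leq M_{\mu}f\}$, $\{M_{\mu}f<f<a\}$, $\{f\geq a\}$ and using the two defining inequalities of the median) does go through: on the three pieces the integrand $|f-a|-|f-M_{\mu}f|$ equals $a-M_{\mu}f$, is at least $-(a-M_{\mu}f)$, and equals $-(a-M_{\mu}f)$ respectively, so the difference of expectations is at least $(a-M_{\mu}f)\big(\mu(f\leq M_{\mu}f)-\mu(f>M_{\mu}f)\big)\geq 0$. This route keeps everything in $L^1$ and yields the slightly sharper constant $2$ on the right-hand inequality, at the cost of proving the median-minimization property instead of a one-line Markov estimate; both arguments use only that $\mu$ is a probability measure and that the norms involved are finite, which Lipschitzness of $f$ on the compact manifold guarantees.
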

Lemma \ref{1top} together with Lemma \ref{medexp} gives the following.
\begin{lem}[{\cite[Section 1.1.3]{L}}]\label{Coste}Suppose that the $(\frac{n}{n-1},1)$-Poincar\'e inequality 
\begin{align*}
    D\|f-E_{\mu}f \|_{\frac{n}{n-1}}\leq \| |\nabla f|\|_{1}\ \ \forall f\in \mathcal{F}
\end{align*}holds for $M$. Then for any $p\geq 1$ we have the $(\frac{np}{n-p},p)$-Poincar\'e inequality 
\begin{align*}
D\| f-E_{\mu}f     \|_{\frac{np}{n-p}}\lesssim \frac{(n-1)p}{n-p}\|  |\nabla f| \|_p\ \ \forall f\in \mathcal{F}.
\end{align*}
\end{lem}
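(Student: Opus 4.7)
The plan is to simply combine Lemma~\ref{1top} with Lemma~\ref{medexp}, using the latter as a bridge to swap the median for the expectation on both sides of the pipeline. No new analytic ingredient is needed; it is essentially bookkeeping of universal constants, and this is precisely why the statement is phrased with $\lesssim$ rather than an explicit sharp constant.

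Concretely, starting from the hypothesis
\begin{align*}
D\|f-E_{\mu}f\|_{\frac{n}{n-1}} \leq \||\nabla f|\|_{1} \quad \forall f\in \mathcal{F},
\end{align*}
I would first apply the upper bound $\|f-M_{\mu}f\|_p\leq 3\|f-E_{\mu}f\|_p$ from Lemma~\ref{medexp} (with $p=n/(n-1)$) to deduce the median-version Poincar\'e inequality
\begin{align*}
\frac{D}{3}\|f-M_{\mu}f\|_{\frac{n}{n-1}} \leq \||\nabla f|\|_{1} \quad \forall f\in \mathcal{F}.
\end{align*}
Then Lemma~\ref{1top} (applied with the constant $D/3$ in place of $D$) immediately yields, for any $p\geq 1$,
\begin{align*}
\frac{D}{3}\|f-M_{\mu}f\|_{\frac{np}{n-p}} \leq \frac{(n-1)p}{n-p}\||\nabla f|\|_{p} \quad \forall f\in \mathcal{F}.
\end{align*}

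Finally, to return to the expectation, I would invoke the lower bound $\tfrac{1}{2}\|f-E_{\mu}f\|_p\leq \|f-M_{\mu}f\|_p$ from Lemma~\ref{medexp} (now with $p=np/(n-p)$), giving
\begin{align*}
\frac{D}{6}\|f-E_{\mu}f\|_{\frac{np}{n-p}} \leq \frac{(n-1)p}{n-p}\||\nabla f|\|_{p},
\end{align*}
which is the desired conclusion, the factor $6$ being absorbed into the symbol $\lesssim$.

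There is no real obstacle here: the only thing to watch is that one must use Lemma~\ref{medexp} in the ``correct direction'' at each end (upper bound on median by expectation at the hypothesis end, and lower bound on median by expectation at the conclusion end), so that the inequality survives both passages; the $L^p$ index is arbitrary in Lemma~\ref{medexp}, so each application is legal. Thus the proof reduces to three lines of inequality chasing.
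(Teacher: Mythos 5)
Your proposal is correct and is exactly the argument the paper intends: the paper derives Lemma \ref{Coste} by precisely this sandwich, passing from the expectation to the median via Lemma \ref{medexp}, applying Lemma \ref{1top}, and returning to the expectation, with the factor $6$ absorbed into $\lesssim$. Nothing further is needed.
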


Combining Corollary \ref{milmancor} with Lemmas \ref{medexp} and \ref{Coste} we obtain the following.

\begin{cor}\label{poin}Let $M$ be an $n$-dimensional closed Riemannian manifold of nonnegative Ricci curvature. Then for any $p\geq 1$ we have the $(\frac{np}{n-p},p)$-Poincar\'e inequality  
\begin{align*}
    \sqrt{\lambda_1(M)}\|f-E_{\mu}f \|_{\frac{np}{n-p}} \lesssim \frac{(n-1)p}{n-p}\| |\nabla f|\|_{p}\ \ \forall f\in \mathcal{F}.
\end{align*}
\end{cor}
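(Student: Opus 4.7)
The plan is to chain the three stated ingredients in a purely mechanical fashion, with no analytic input beyond what has already been assembled. Since all of Corollary~\ref{milmancor}, Lemma~\ref{medexp}, and Lemma~\ref{Coste} are already available, the proof will essentially be a bookkeeping exercise about replacing medians by means and then bootstrapping from $p=1$ to general $p$.

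First I would start from Corollary~\ref{milmancor}, which gives the median form of the $(\frac{n}{n-1},1)$-Poincar\'e inequality
\[
\sqrt{\lambda_1(M)}\,\|f-M_{\mu}f\|_{\frac{n}{n-1}} \lesssim \||\nabla f|\|_1 \qquad \forall f\in\mathcal{F}.
\]
I would then invoke Lemma~\ref{medexp} with $p=\tfrac{n}{n-1}$ to compare $\|f-M_\mu f\|_{n/(n-1)}$ with $\|f-E_\mu f\|_{n/(n-1)}$; since these differ only by a universal multiplicative constant, I can absorb that constant into the $\lesssim$ and arrive at the expectation version
\[
\sqrt{\lambda_1(M)}\,\|f-E_{\mu}f\|_{\frac{n}{n-1}} \lesssim \||\nabla f|\|_1 \qquad \forall f\in\mathcal{F}.
\]
This is precisely the hypothesis of Lemma~\ref{Coste} with $D$ equal to (a universal constant times) $\sqrt{\lambda_1(M)}$.

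Next I would feed this into Lemma~\ref{Coste}: its conclusion immediately yields, for every $p\ge 1$ with $p<n$,
\[
\sqrt{\lambda_1(M)}\,\|f-E_{\mu}f\|_{\frac{np}{n-p}} \lesssim \frac{(n-1)p}{n-p}\,\||\nabla f|\|_p \qquad \forall f\in\mathcal{F},
\]
which is the stated corollary. The cases $p\ge n$ are vacuous in the stated exponent $\tfrac{np}{n-p}$, so no separate argument is needed there.

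There is essentially no obstacle beyond tracking constants carefully, because the hard analytic content, namely Milman's isoperimetric inequality and the Bobkov--Houdr\'e--Federer--Fleming--Maz'ya equivalence, has already been deployed to produce Corollary~\ref{milmancor} and the abstract bootstrap Lemma~\ref{Coste}. The only point that deserves attention is to be sure that the universal constant arising from Lemma~\ref{medexp} (applied with $p=\tfrac{n}{n-1}$) is swallowed into the $\lesssim$ before invoking Lemma~\ref{Coste}, so that the effective constant $D$ in the hypothesis of Lemma~\ref{Coste} is a universal multiple of $\sqrt{\lambda_1(M)}$.
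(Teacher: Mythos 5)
Your proposal is correct and follows the paper's route exactly: the paper likewise obtains Corollary~\ref{poin} by combining Corollary~\ref{milmancor} with Lemma~\ref{medexp} (to pass from the median to the mean) and then applying Lemma~\ref{Coste}, with the universal constants absorbed into the $\lesssim$.
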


The H\"older inequality implies the following lemma.
\begin{lem}\label{holder}Suppose that we have the $(\frac{2n}{n-2},2)$-Poincar\'e inequality
\begin{align*}
D\| f-E_{\mu}f\|_{\frac{2n}{n-2}}\leq \| |\nabla f|\|_2 \ \ \forall f\in \mathcal{F}.
\end{align*}Then we have
\begin{align*}
D \|f-E_{\mu}f\|_2^{\frac{2+n}{n}}\| f-E_{\mu}f\|_{1}^{-\frac{2}{n}}\leq \| |\nabla f|\|_2\ \ \forall f\in \mathcal{F}.
\end{align*}
\end{lem}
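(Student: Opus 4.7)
The plan is to apply Hölder's interpolation to sandwich $\|g\|_2$ between $\|g\|_1$ and $\|g\|_{2n/(n-2)}$, where $g := f - E_\mu f$, and then invoke the hypothesis.

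First I would determine the interpolation exponent. I need $\theta \in [0,1]$ satisfying
\begin{equation*}
\frac{1}{2} = \frac{\theta}{1} + \frac{(1-\theta)(n-2)}{2n},
\end{equation*}
which a direct computation shows forces $\theta = \frac{2}{n+2}$. By the standard Hölder interpolation inequality for $L_p$ norms on the probability space $(M,\mu)$,
\begin{equation*}
\|g\|_2 \leq \|g\|_1^{\frac{2}{n+2}}\,\|g\|_{\frac{2n}{n-2}}^{\frac{n}{n+2}}.
\end{equation*}

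Next I would rearrange: raising both sides to the power $\frac{n+2}{n}$ gives
\begin{equation*}
\|g\|_2^{\frac{n+2}{n}} \leq \|g\|_1^{\frac{2}{n}}\,\|g\|_{\frac{2n}{n-2}},
\end{equation*}
equivalently $\|g\|_{\frac{2n}{n-2}} \geq \|g\|_2^{\frac{n+2}{n}}\,\|g\|_1^{-\frac{2}{n}}$ (at least when $\|g\|_1 > 0$; the case $g \equiv 0$ is trivial).

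Finally I would multiply by $D$ and apply the assumed $(\frac{2n}{n-2},2)$-Poincaré inequality to the left-hand side, yielding
\begin{equation*}
D\,\|f - E_\mu f\|_2^{\frac{n+2}{n}}\,\|f - E_\mu f\|_1^{-\frac{2}{n}} \leq D\,\|f - E_\mu f\|_{\frac{2n}{n-2}} \leq \||\nabla f|\|_2,
\end{equation*}
which is the desired inequality. There is no real obstacle here; the only thing to be careful about is the arithmetic of the interpolation exponent and handling the degenerate case $\|g\|_1 = 0$ separately.
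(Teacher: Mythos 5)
Your proof is correct and is exactly the argument the paper intends: the paper gives no separate proof beyond the remark that ``the H\"older inequality implies the lemma,'' and your interpolation $\|g\|_2\leq \|g\|_1^{\frac{2}{n+2}}\|g\|_{\frac{2n}{n-2}}^{\frac{n}{n+2}}$ with $g=f-E_{\mu}f$, rearranged and combined with the assumed Poincar\'e inequality, is precisely that computation (and your exponent $\theta=\frac{2}{n+2}$ is right). Nothing further is needed beyond the degenerate case $\|g\|_1=0$, which you already noted.
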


\begin{thm}[{Cheng-Li, \cite[Theorem 1]{CL}}]\label{chenglithm}Let $M$ be a closed Riemannian manifold. Assume that 
\begin{align}\label{assump}
D \|f-E_{\mu}f\|_2^{\frac{2+n}{n}}\| f-E_{\mu}f\|_{1}^{-\frac{2}{n}}\leq \| |\nabla f|\|_2 \ \ \forall f\in \mathcal{F}
\end{align}holds. Then we have
\begin{align*}
\lambda_k(M)\gtrsim k^{\frac{2}{n}}D^2
\end{align*}for any $k$.
\end{thm}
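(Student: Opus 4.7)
The approach is a standard heat semigroup argument: convert the Nash-type inequality (\ref{assump}) into an ultracontractivity estimate for the heat semigroup $(P_t)_{t\ge 0}$ on $M$, and then extract eigenvalue lower bounds from the trace of the heat kernel.

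First I would fix $f\in\mathcal{F}$, set $\tilde f:=f-E_\mu f$, and study $u(t):=\|P_t\tilde f\|_2^2$. Since $u'(t)=-2\,\|\,|\nabla P_t\tilde f|\,\|_2^2$, applying (\ref{assump}) to $P_t\tilde f$ (which remains centered) and using the $L^{1}$-contractivity $\|P_t\tilde f\|_1\le\|\tilde f\|_1$ yields the differential inequality
\[
u'(t)\;\le\;-2D^{2}\,u(t)^{1+2/n}\,\|\tilde f\|_1^{-4/n}.
\]
The substitution $v:=u^{-2/n}$ linearizes this to $v'(t)\ge (4D^{2}/n)\,\|\tilde f\|_1^{-4/n}$; integrating from $0$ gives the $L^{1}\!\to\!L^{2}$ smoothing estimate
\[
\|P_t\tilde f\|_2\;\le\;\bigl(n/(4D^{2}t)\bigr)^{n/4}\|\tilde f\|_1.
\]

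Next I would upgrade this to an $L^{1}\!\to\!L^{\infty}$ bound. By self-adjointness of $P_t$ on $L^{2}(\mu)$, the dual inequality $\|P_t\tilde g\|_\infty\le\bigl(n/(4D^{2}t)\bigr)^{n/4}\|\tilde g\|_2$ holds for centered $\tilde g$; composing via the semigroup identity $P_t=P_{t/2}\circ P_{t/2}$ produces the ultracontractive bound
\[
\|P_t\tilde f\|_\infty\;\le\;\bigl(n/(2D^{2}t)\bigr)^{n/2}\|\tilde f\|_1,
\]
which is equivalent to the pointwise kernel estimate $|p_t(x,y)-1|\le\bigl(n/(2D^{2}t)\bigr)^{n/2}$, where $p_t$ denotes the heat kernel with respect to $\mu$.

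Finally, the spectral expansion $p_t(x,x)=1+\sum_{i\ge 1}e^{-\lambda_i(M)t}\varphi_i(x)^{2}$ integrated against $\mu$ yields the trace inequality $\sum_{i\ge 1}e^{-\lambda_i(M)t}\le\bigl(n/(2D^{2}t)\bigr)^{n/2}$. Bounding the sum below by $ke^{-\lambda_k(M)t}$ and choosing $t=n/(2\lambda_k(M))$ produces the cancellation $ke^{-n/2}\le\bigl(\lambda_k(M)/D^{2}\bigr)^{n/2}$, which rearranges to $\lambda_k(M)\ge e^{-1}D^{2}k^{2/n}$. The subtle point — and the heart of the argument — is precisely this last optimization: the ultracontractive bound carries an unavoidable dimensional factor $(n/\cdot)^{n/2}$, and only the specific choice $t\asymp n/\lambda_k(M)$, which couples the exponent $n$ of the ultracontractivity to the exponent $2/n$ in the target, cancels the $n^{n/2}$ factors on both sides and yields a numerical constant independent of $n$.
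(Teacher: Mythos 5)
Your proposal is correct and is essentially the same Nash-type heat-kernel argument as the paper's: a differential inequality derived from (\ref{assump}) yielding the on-diagonal decay $p_t(x,x)-1\lesssim\bigl(n/(D^2t)\bigr)^{n/2}$, then the trace bound $\sum_{i\geq 1}e^{-\lambda_i(M)t}\lesssim\bigl(n/(D^2t)\bigr)^{n/2}$, and the same dimension-cancelling choice $t=n/(2\lambda_k(M))$ that makes the constant universal. The only cosmetic difference is the route to the kernel bound: you run the Nash iteration on $P_t\tilde f$ for an arbitrary centered $f$ to get $L^1\to L^2$ smoothing and then use self-adjointness and $P_t=P_{t/2}\circ P_{t/2}$, whereas the paper applies (\ref{assump}) directly to $y\mapsto G(x,y,t/2)=H(x,y,t/2)-1$, using $\int_M|G(x,y,t/2)|\,d\mu(y)\leq 2$ and the semigroup identity to get the ODE for $G(x,x,t)$; both yield the same conclusion.
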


\begin{proof}[Proof of Theorem \ref{weyltype}]The case where $n=2$ follows from (\ref{muriyari}) and hence we assume $n\geq 3$. 

Since $n\geq 3$ we have 
\begin{align*}
    \sqrt{\lambda_1(M)}\|f-E_{\mu}f \|_{\frac{2n}{n-2}} \lesssim \| |\nabla f|\|_{2}\ \ \forall f\in \mathcal{F}.
\end{align*}by Corollary \ref{poin}. From Lemma \ref{holder} we have
    \begin{align*}
\sqrt{\lambda_1(M)} \|f-E_{\mu}f\|_2^{\frac{2+n}{n}}\| f-E_{\mu}f\|_{1}^{-\frac{2}{n}}\lesssim \| |\nabla f|\|_2\ \ \forall f\in \mathcal{F}.
\end{align*}Theorem \ref{chenglithm} thereby implies the theorem. This completes the proof.
\end{proof}
For the completeness of this paper we give the proof of Theorem \ref{chenglithm} following \cite{CL}.
\begin{proof}[Proof of Theorem \ref{chenglithm}]Let $H(x,y,t)$ denotes the fundamental solution of the heat equation, i.e., 
\begin{align*}
    \frac{\partial H}{\partial t}(x,y,t)=\Delta_x H(x,y,t)
\end{align*}and 
\begin{align*}
    \int_M H(x,y,t)f(y)d\mu(y)\to f(x) \text{ as }t\to 0 
\end{align*}for any $f\in C^{\infty}(M)$ (\cite{Gr2,Gr}). We define 
$G:M\times M\times [0,\infty) \to \mathbb{R}$ as $G(x,y,t):=H(x,y,t)-1$. Denote by $\{\phi_i\}_{i=0}^{\infty}$ be an orthnormal basis of eigenfunctions of the Laplacian with respect to $\mu$, where $\phi_i$ corresponds to the eigenvalue $\lambda_k(M)$. Since $\phi_0=1$ we have an expansion 
\begin{align*}
    G(x,y,t)=\sum_{i=1}^{\infty}e^{-\lambda_i(M) t }\phi_i(x)\phi_i(y).
\end{align*}Note also that the semi-group property
\begin{align*}
G(x,y,t+s)=\int_M G(x,z,s)G(z,y,t)d\mu(z) \ \ \forall x,y\in M \ \forall s,t\geq 0
\end{align*}holds. We thereby have
\begin{align*}
G'(x,x,t)=\ &\int_M G'(x,y,t/2)G(x,y,t/2)d\mu(y)\\
         =\ &\int_M \Delta_y G(x,y,t/2)G(x,y,t/2)d\mu(y).
\end{align*}
Since
\begin{align*}
\int_M|G(x,y,t)|d\mu(y)\leq \int_M |H(x,y,t)|d\mu(y)+1=2,
\end{align*}the assumption (\ref{assump}) and the semi-group property give 
\begin{align*}
-G'(x,x,t)=\ &\int_M |\nabla_y G(x,y,t/2) |^2d\mu(y)\\
          \geq \ &2^{-\frac{4}{n}}D^2\Big( \int_M G(x,y,t/2)^2 d\mu(y)\Big)^{\frac{2+n}{n}}\\
          = \ & 2^{-\frac{4}{n}}D^2 G(x,x,t)^{\frac{2+n}{n}}.
\end{align*}Since $G(x,x,t)=H(x,x,t)-1 \to \infty$ as $t\to 0$, we obtain
\begin{align*}
\frac{n}{2}G(x,x,t)^{-\frac{2}{n}}\geq 2^{-\frac{4}{n}}D^2t
\end{align*}It gives 
\begin{align}\label{infoconc}
    G(x,x,t)\leq 4\Big(\frac{2D^2}{n}\Big)^{-\frac{n}{2}}t^{-\frac{n}{2}}.
\end{align}Integrating both-sides yields
\begin{align*}
    \sum_{i=1}^{\infty}e^{-\lambda_i(M) t}\leq 4\Big(\frac{2D^2}{n}\Big)^{-\frac{n}{2}}t^{-\frac{n}{2}}.
\end{align*}
Putting $t:=n/(2\lambda_k(M))$ we therefore obtain
\begin{align*}
ke^{-\frac{n}{2}}\leq \sum_{i=1}^k e^{-\frac{\lambda_i(M)}{2\lambda_k(M)}n}\leq \sum_{i=1}^{\infty} e^{-\frac{\lambda_i(M)}{2\lambda_k(M)}n}\leq 4\Big(\frac{2\lambda_k(M)}{D^2}\Big)^{\frac{n}{2}},
\end{align*}which implies the conclusion of the theorem. This completes the proof.
\end{proof}

In \cite{CL} instead of putting $t=n/(2\lambda_k(M))$ in the above proof, Cheng-Li put $t=1/(2\lambda_k(M))$ and the conclusion in there becomes $\lambda_k(M)\gtrsim c_n k^{\frac{2}{n}}D^2$, where $c_n>0$ is a constant depending only on $n$. In \cite[Chapter III §5]{SY} it is putted $t=n/(4\lambda_k(M))$ but the assumption there is different. The assumption there seems the $(\frac{2n}{n-2},2)$-Poincar\'e inequality
\begin{align*}
D\| f\|_{\frac{2n}{n-2}}\leq \| |\nabla f|\|_2 \ \ \forall f\in \mathcal{F}
\end{align*}to derive the lower bound of $\lambda_k(M)$.

We also give the proof of Theorem \ref{eigenfctconc} following \cite[Corollary 1]{CL}.
\begin{proof}[Proof of Theorem \ref{eigenfctconc}]
   Corollary \ref{poin}, Lemma \ref{holder} and (\ref{infoconc}) yield
   \begin{align*}
      e^{-\lambda_k(M)t} \phi_k(x)^2\leq 4\Big(\frac{c\lambda_1(M)}{n}\Big)^{-\frac{n}{2}}t^{-\frac{n}{2}},
   \end{align*}where $c>0$ is a universal constant. Thus substituting $t:=n/(2\lambda_k(M))$ we obtain (\ref{supnorm}). This completes the proof.
\end{proof}










\begin{thebibliography}{9999}
\bibitem{A}G.~Avakumovic, \textit{\"{U}ber die Eigenfunktionen auf geschlossenen Riemannschen Mannigfaltigkeiten}, Math. Z. 65 (1956), p. 327--344.
\bibitem{BH}S.~G.~Bobkov and C.~Houdr\'e, \textit{Some connections between isoperimetric and Sobolev-type inequalities}.
Mem. Am. Math. Soc. 129(616) (1997). 
\bibitem{C}S.~Y.~Cheng, 
\textit{Eigenvalue comparison theorems and its geometric applications}. Math. Z.143(1975), no.3, 289--297.
\bibitem{CL}S.~Y.~Cheng and P.~Li, \textit{Heat kernel estimates and lower bound of eigenvalues}. Comment.Math. Helv. 56 (1981), no. 3, 327--338.
\bibitem{DG}J.~J.~Duistermaat and V.~W.~Guillemin, \textit{The spectrum of positive elliptic operators and periodic bicharacteristics}. Invent Math 29, 39--79 (1975).
\bibitem{FF}H.~Federer and W.~H.~Fleming, \textit{Normal and integral currents}. Ann. Math. (2) 72, 458--520 (1960).

 \bibitem{Gr2}A.~Grigor'yan, \textit{Heat kernel and analysis on manifolds}. AMS/IP Studies in Advanced Mathematics, 47. American Mathematical Society, Providence, RI; International Press, Boston, MA, 2009. 
 \bibitem{Gr}A.~Grigor'yan, \textit{Heat kernels on weighted manifolds and applications}, The ubiquitous heat kernel, 93–191.
Contemp. Math., 398
American Mathematical Society, Providence, RI, 2006.




 
\bibitem{Gr2}M.~Gromov, \textit{Metric structures for Riemannian and non-Riemannian spaces}. Based on the 1981 French original With appendices by M. Katz, P. Pansu, and S. Semmes. Translated from the French by S. M. Bates. Progress in Mathematics, 152. Birkhäuser Boston, Boston, MA, 1999. 
\bibitem{Gr}M.~Gromov, \textit{Paul Levy's isoperimetric inequality}, Publications IHES, 1980.
\bibitem{HKP}A.~Hassannezhad, G.~Kokarev, and I.~Polterovich, \textit{Eigenvalue inequalities on Riemannian manifolds with a lower Ricci curvature bound}. J. Spectr. Theory6(2016), no.4, 807--835.
\bibitem{H}L.~H\"{o}rmander, \textit{The spectral function of an elliptic operator}, Acta Math. 121 (1968), 193--218.
\bibitem{L}B.~Levitan, \textit{On the asymptotic behavior of the spectral function of a self-adjoint differential
equation of second order}. Isv. Akad. Nauk SSSR Ser. Mat. 16 (1952), p.325--352.
\bibitem{Liu}S.~Liu, \textit{An optimal dimension-free upper bound
         for eigenvalue ratios}, preprint arXiv:1405.2213v3.
\bibitem{Ma}V.~G.~Maz'ja, \textit{Sobolev Spaces}. Springer Series in Soviet Mathematics. Springer, Berlin (1985).
\bibitem{Mi1}E.~Milman, \textit{On the role of convexity in isoperimetry, spectral gap and concentration}, Invent. Math. 177 (2009), no. 1, 1--43.
\bibitem{L}L.~Saloff-Coste, \textit{Aspects of Sobolev-type inequalities}. London Math. Soc. Lecture Note Ser., 289
Cambridge University Press, Cambridge, 2002.
\bibitem{SY}R.~Schoen and S.-T.~Yau, \textit{Lectures on differential geometry}. Lecture notes prepared by Wei Yue
Ding, Kung Ching Chang [Gong Qing Zhang], Jia Qing Zhong and Yi Chao Xu. Translated from the
Chinese by Ding and S. Y. Cheng. Preface translated from the Chinese by Kaising Tso. Conference
Proceedings and Lecture Notes in Geometry and Topology, I. International Press, Cambridge, MA, 1994.
\bibitem{S}C.~D.~Sogge, \textit{Concerning the Lp norm of spectral clusters for second-order elliptic operators on compact manifolds}. J. Funct. Anal.77(1988), no.1, 123--138.
\bibitem{Z}S.~Zelditch, \textit{Eigenfunctions of the Laplacian on a Riemannian manifold}.
CBMS Reg. Conf. Ser. Math., 125
Published for the Conference Board of the Mathematical Sciences, Washington, DC; by the American Mathematical Society, Providence, RI, 2017.
\end{thebibliography}
\end{document}